%
%
%
%
%
\RequirePackage{fix-cm}
\documentclass[smallextended]{svjour3}  
\smartqed  
\usepackage{graphicx}
%
%
%
%
%
\usepackage[utf8]{inputenc}
\usepackage[english]{babel}
\usepackage{bbm}

\usepackage[nottoc]{tocbibind}
\usepackage{mathrsfs,amsfonts,amssymb,amsmath}

\usepackage{amsthm}
\usepackage{enumerate}
\usepackage{graphicx,cite}
\usepackage{romannum}
\usepackage{hyperref}
\usepackage{authblk}
\usepackage{graphicx}

\allowdisplaybreaks

\hypersetup{colorlinks=true, linkcolor=blue, citecolor=red}

\textwidth=15.0cm \textheight=21.0cm \hoffset=-1.1cm \voffset=-0.5cm

\numberwithin{equation}{section}

\numberwithin{theorem}{section}
\numberwithin{remark}{section}


\usepackage[square,numbers]{natbib}
\bibliographystyle{abbrvnat}

\begin{document}
\pagenumbering{arabic}
\setlength{\belowdisplayskip}{0pt}

\newpage

\title{Polynomial escape rates via maximal large deviations}


\author{
        Yaofeng Su
}


\institute{
 Yaofeng Su  \at
              Università degli Studi di Roma Tor Vergata, Italy\\
              \email{suyaofengmath@gmail.com} 
}


\maketitle

\begin{abstract}
In this short note, we propose a new and short approach to polynomial escape rates, which can be applied to various open systems with intermittency. The tool of our approach is the maximal large deviations developed in \cite{mldp}.

\keywords{escape rates \and open dynamical systems \and intermittent maps \and Maximal large deviations}

\end{abstract}

\section{introduction}

In the early 80s, mathematicians suggested investigating open systems (i.e., systems with holes or leakages) as a means
of generating transient chaos \cite{york},  retrieving information from a distribution and inferring properties of the equivalent closed systems (i.e. systems without holes). More precisely, consider a dynamical system $(X, f, \mu)$, $\mu$ is an invariant probability on $X$ preserved by the dynamics $f$, a hole is a subset $H \subsetneq X$, then the phase space $X$ is opened by $H$, leading to leakages/escapes of trajectories through this hole $H$. Such a system is called an open system. An escape time $e_H$ is defined by $e_H(x):=\inf\{n\ge 0: f^n(x) \in H\}$. A first hitting time  $\tau_H$ is defined by $\tau_H(x):=\inf\{n\ge 1: f^n(x) \in H\}$, its distribution $\mu(\tau_H>n)$ represents the probability of trajectories that remain within $X$ up to time $n$. Studying the decay rate of $\mu(\tau_H>n)=\mu(e_H>n-1)$, referred to as an escape rate, can reveal insights into the chaotic behavior of the closed system $(X, f, \mu)$.

For closed systems with exponential mixing rates, such as some uniformly expanding maps and Sinai billiards, exponential escape rates have been demonstrated in \cite{Demersexp, haydn, fresta, escapeinfinitesinai,escapesinai}. 
However, for the closed systems enjoying polynomial mixing rates, there are few rigorous results in this direction:  Liverani-Saussol-Vaienti maps were examined in \cite{Demers}, where a polynomial escape rate was established. 

The purpose of the present paper is to propose a short but new approach to polynomial escape rates,  which can be applied to various systems with intermittency. Our results and methods have the following new aspects. 
\begin{enumerate}
    \item We prove that a Young tower with polynomial mixing rates has a polynomial escape rate (same as the mixing rate, see Remark \ref{adapt3}). This is different from the general results in \cite{haydn, fresta} which proved that a polynomial $\phi$-mixing dynamical system has an exponential escape rate. 
    \item When $\mu$ is an SRB measure,  our Theorem \ref{thm} recovers one result in \cite{Demers}. But the requirements of our hole $H$ in Theorems \ref{thm} and \ref{thm1} are relaxed and are not required to be a Markov hole as in \cite{Demers}.
    \item While our Theorems \ref{thm} and \ref{thm1} focus on specific systems, our methods can be adapted to various dynamical systems with intermittency, see Remarks \ref{adapt3}, \ref{adapt1} and \ref{adapt2}.
    \item In contrast to the exponential escape rates for two-dimensional Sinai billiards \cite{escapeinfinitesinai}, our Theorem \ref{thm1} presents polynomial escape rates for a two-dimensional hyperbolic system with intermittency.
\end{enumerate}

The structure of this paper: section \ref{def} includes necessary definitions and notations used throughout this paper. Section \ref{1d} addresses polynomial escape rates for one-dimensional intermittent maps. Its method is adapted to polynomial escape rates for two-dimensional hyperbolic systems with intermittency in Section \ref{highd}.

\section{Definitions and notations}\label{def}

Throughout this paper, we consider a dynamical system $(X, f, \mu)$, $f$ is a hyperbolic dynamics, and the probability $\mu$ preserved by $f$ is an SRB measure on $X$, i.e., the conditional measure on unstable manifolds is absolutely continuous w.r.t. the Lebesgue measure on the unstable manifolds. A hole $H \subsetneq X$ satisfies $\mu(H)\in (0,1)$. 

The notation $cls(H)$ (resp. $int(H)$) means the closure (resp. interior) of $H$, $a_n \precsim b_n$ (resp. $a_n \succsim b_n$) means that there is a constant $C\ge1$ such that $|a_n|\le C |b_n|$ (resp. $|a_n|\ge C^{-1} |b_n|$). $a_n \approx b_n$ means that $|a_n|\precsim |b_n|$ and $|a_n|\succsim|b_n|$. 

\section{One-dimensional intermittent maps}\label{1d}

In this section, we present an approach to polynomial escape rates for one-dimensional intermittent maps. The Liverani-Saussol-Vaienti maps (LSV maps), which are typical examples of one-dimensional intermittent maps, are defined as follows:
\begin{eqnarray}\label{lsvmap}
g_{\alpha}(x) =
\begin{cases}
x+2^{\alpha}x^{1+\alpha},      & 0\le x \le \frac{1}{2}\\
2x-1,  & \frac{1}{2} < x \le 1 \\
\end{cases},\quad  0< \alpha <1 .
\end{eqnarray}

It was proved in \cite{lsv, Young} that there is an SRB measure $\mu_{\alpha}$ preserved by $g_{\alpha}$ and the mixing rate is $O(n^{1-1/\alpha})$.

\begin{theorem}[Escape Rates I]\ \label{thm}\ \par
 For the dynamical system $(X,f,\mu)$:= $([0,1],g_{\alpha}, \mu_{\alpha})$, if a (dis)connected hole $H$ satisfies $ 0\notin  cls(H)$ and $int(H)\neq \emptyset$, then  $\mu(\tau_H>n)\approx n^{1-1/\alpha}$.
 \end{theorem}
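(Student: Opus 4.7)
The plan is to establish the two matching bounds $\mu(\tau_H>n)\succsim n^{1-1/\alpha}$ and $\mu(\tau_H>n)\precsim n^{1-1/\alpha}$ separately. The lower bound will exploit slow escape from a neighbourhood of the indifferent fixed point $0$; the upper bound will be driven by the maximal large deviation estimate of \cite{mldp} applied to a H\"older proxy for $\mathbf{1}_H$.

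For the lower bound, since $0\notin cls(H)$ I will fix $\delta>0$ with $[0,\delta]\cap H=\emptyset$ and set $A_n:=\{x:g_\alpha^k(x)\in[0,\delta]\text{ for all }0\le k\le n\}$. Classical estimates for LSV maps (see \cite{lsv,Young}) give $\mu(A_n)\succsim n^{1-1/\alpha}$: points within Lebesgue-distance $n^{-1/\alpha}$ of the neutral fixed point have escape time from $[0,\delta]$ exceeding $n$, and the $\mu_\alpha$-density behaving like $x^{-\alpha}$ near $0$ converts the Lebesgue weight $n^{-1/\alpha}$ into a $\mu$-weight of order $(n^{-1/\alpha})^{1-\alpha}=n^{1-1/\alpha}$. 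On $A_n$ one has $\tau_H>n$, which yields the lower bound.

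For the upper bound, the hypothesis $int(H)\neq\emptyset$ allows me to select a nonnegative H\"older function $\phi$ supported in $int(H)$ with $\phi\le\mathbf{1}_H$ and $c:=\int\phi\,d\mu>0$. If $\tau_H(x)>n$, then $f^k(x)\notin H$ for every $1\le k\le n$, so $\phi(f^k(x))=0$ for $1\le k\le n-1$ and therefore the Birkhoff sum satisfies $S_n\phi(x)\le\phi(x)\le 1$. For $n\ge 2/c$ this forces $|S_n\phi(x)-nc|\ge nc/2$, so
\[
\{\tau_H>n\}\subset\{|S_n\phi-nc|\ge nc/2\}.
\]
Applying the maximal large deviation principle of \cite{mldp} to the mean-zero H\"older observable $\phi-c$, and using the polynomial mixing rate $O(n^{1-1/\alpha})$ of $(X,f,\mu)$, I will obtain $\mu(|S_n\phi-nc|\ge nc/2)\precsim n^{1-1/\alpha}$, closing the upper bound.

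The main obstacle will be verifying that the maximal large deviation machinery of \cite{mldp} delivers precisely the exponent $1-1/\alpha$ for mean-zero H\"older observables on $(X,f,\mu)$; this amounts to checking that the abstract hypotheses of \cite{mldp} are met by the LSV system, presumably via its Young-tower representation and the polynomial tail of the return time. The H\"older approximation $\phi$ is the step where $int(H)\neq\emptyset$ is genuinely needed, in order to bring the indicator $\mathbf{1}_H$ into the regularity class handled by \cite{mldp} while preserving a positive mean. Dually, $0\notin cls(H)$ is exactly what prevents the slow orbits near $0$ from being swallowed by the hole, and therefore what keeps the lower bound from being smaller than $n^{1-1/\alpha}$.
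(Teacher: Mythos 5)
Your lower bound is the same idea as the paper's, just phrased on $[0,1]$ rather than on the Young tower: you measure the set of points within $n^{-1/\alpha}$ of the neutral fixed point using the density $\approx x^{-\alpha}$, while the paper measures the levels $\{(x,i):R(x)>n+i\}$ of the tower using the tail $\mu(R>n)\approx n^{-1/\alpha}$; both give $n^{1-1/\alpha}$. Your upper bound, however, is a genuinely different and arguably slicker argument. The paper decomposes according to how many times an orbit returns to the tower base before hitting the hole: it invokes \cite{Demersexp} to get \emph{exponential} escape for the uniformly expanding induced map, then applies the maximal large deviation estimate of \cite{mldp} to $\mathbf{1}_B$ on the tower to control the (random) number of base returns up to the random escape time $\tau_{H\times\{0\}}$ — the \emph{maximal} version is essential there because $\tau_{H\times\{0\}}$ is unbounded above $n$. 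You instead insert a nonnegative H\"older bump $\phi\le\mathbf{1}_H$ supported in $int(H)$, observe that $\tau_H>n$ forces $S_n\phi\le 1$ and hence a large deviation of order $nc/2$ at the deterministic time $n$, and apply the LDP to $\phi-c$. Notice you do not actually need the maximal version for this: a one-time polynomial large-deviation bound at time $n$ (e.g., Melbourne--Nicol's $O(n^{1-1/\alpha})$ rate for H\"older observables on LSV, which is precisely the content needed to close your ``main obstacle'') already suffices, and you avoid both the return-map escape-rate input and the base-return bookkeeping. The trade-off is robustness: the paper's tower/return-count route carries over verbatim to the high-dimensional solenoid of Theorem \ref{thm1}, where $\mu$ lives on a fractal attractor and producing a H\"older $\phi$ supported in $int(H)$ with $\int\phi\,d\mu>0$ is no longer automatic, whereas your H\"older-proxy argument would need extra work there. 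For Theorem \ref{thm} as stated, your proposal is correct modulo citing the appropriate polynomial LDP for H\"older observables on LSV maps.
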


 \begin{proof}
     Choose a sufficiently small $a_m:=g_{\alpha}|_L^{-m}\{1\}$  such that $H \subseteq (a_m,1]$. Here $g_{\alpha}|_L$ is the left branch of $g_{\alpha}$.
     
     \vspace*{4pt}\noindent\textbf{1. A first return tower.} It follows from \cite{Young} that the system $(X, f, \mu)$ can be modelled by a first return Young tower $\Delta:=\{(x,n) \in (a_m,1] \times \{0,1,2, \cdots\}: n<R(x)\}\}$. Here $R(x):=\inf\{n\ge 1: f^n(x)\in (a_m,1]\}$ defined on $(a_m,1]$ is a first return time, the dynamics  $F:\Delta \to \Delta$ sends $(x,n)$ to $(x,n+1)$ if $n+1<R(x)$, and $(x,n)$ to $(f^R(x),0)$ if $n=R(x)-1$. $F$ preserves an invariant probability $\mu_{\Delta}$ on $\Delta$ and its mixing rate is $O(n^{1-1/\alpha})$. Let $\pi:\Delta \to X$, $\pi(x,n)=f^n(x)$, then $\pi_* \mu_{\Delta}=\mu$. \\
     \vspace*{4pt}\noindent\textbf{2. Lifting.} Define $\tau_{H\times \{0\}}:=\inf\{n\ge 1: F^n \in H\times \{0\}\}$, then 
     \begin{align*}
         \mu(\tau_H>n)&=\mu_{\Delta}(\inf\{n\ge 1: f^n \circ \pi \in H\}>n)=\mu_{\Delta}(\inf\{n\ge 1: F^n \in \pi^{-1}H\}>n)\\
         &=\mu_{\Delta}(\inf\{n\ge 1: F^n \in H\times \{0\}\}>n)=\mu_{\Delta}(\tau_{H\times \{0\}}>n).
     \end{align*}
     \vspace*{4pt}\noindent\textbf{3. Lower bounds.} Note that $\bigcup_{i \ge 1}\{(x,i): R(x)>n+i\} \subseteq \{\tau_{H\times \{0\}}>n\}$. Then $\mu(\tau_H>n)=\mu_{\Delta}(\tau_{H\times \{0\}}>n)\succsim \sum_{i}\mu(R> n+i) \approx n^{1-1/\alpha}$. We remark that this is also proved in Corollary 5.1 of \cite{ivan}.\\
     \vspace*{4pt}\noindent\textbf{4. Several hitting times.} Denote the base $(a_m,1]\times \{0\}$ by $B$, define a new hitting time on $\Delta$ by  $\tau_{H, R}:=\sum_{m= 0}^{\tau_{H\times \{0\}}}\mathbbm{1}_{B}\circ F^m$, and define another hitting time on $B$ by $\tau_{H, B}:=\tau_{H,R}|_{B}$. Since $f^R$ is a uniformly expanding map,  by Theorem 2.1 of \cite{Demersexp} $\mu_{\Delta}|_B(\tau_{H,B}>n)\precsim e^{-C n}$ for some $C=C_H>0$ when $H$ is a small Markov hole (i.e., an element of $\bigvee_{i=0}^{\infty} (f^R)^{-i}\{[a_m,a_{m-1}],\cdots, [a_2,a_1], [a_1,1]\}$). But this also holds for our hole $H$ because $int(H)$ must contain a small Markov hole. \\ 
     \vspace*{4pt}\noindent\textbf{5. Upper bounds.} By maximal large deviations, i.e., Theorem 3.1 of \cite{mldp}, we have \begin{gather*}
         \mu_{\Delta}\Big\{\max_{N\ge n}\Big|\frac{\sum_{m= 0}^{N}\mathbbm{1}_{B}\circ F^m}{N}-\mu_{\Delta}(B)\Big|\ge \frac{\mu_{\Delta}(B)}{2} \Big\}\precsim n^{1-1/\alpha}.
     \end{gather*} Denote $\mu_{\Delta}(B)$ by K, now we can estimate an upper bound: $\mu_{\Delta}(\tau_{H\times 0}>n)$\begin{align*}
         &\le \mu_{\Delta}\Big(\tau_{H\times \{0\}}>n, \Big|\frac{\tau_{H,R}}{\tau_{H\times \{0\}}}-K\Big|\ge \frac{K}{2}\Big) +\mu_{\Delta}\Big(\tau_{H\times \{0\}}>n, \Big|\frac{\tau_{H,R}}{\tau_{H\times \{0\}}}-K\Big|\le \frac{K}{2}\Big)\\
         & \le \mu_{\Delta}\Big\{\max_{N\ge n}\Big|\frac{\sum_{m= 0}^{N}\mathbbm{1}_{B}\circ F^m}{N}-K\Big|\ge \frac{K}{2} \Big\}+\mu_{\Delta}\Big(\tau_{H,R}>n \frac{K}{2}\Big)\\
         &\precsim n^{1-1/\alpha}+\sum_{i\ge 0}\mu_{\Delta}\Big\{(x,i):\tau_{H,R}(x,i)>n \frac{K}{2}\Big\}\\
         &\precsim n^{1-1/\alpha}+\sum_{i\ge 1}\mu_{\Delta}|_{B}\Big\{\tau_{H,B}>n \frac{K}{2}, R\ge i\Big\}\\
         &\precsim n^{1-1/\alpha}+\sum_{1\le  i\le nK/2}\mu_{\Delta}|_{B}\Big\{\tau_{H,B}>n \frac{K}{2}\Big\}+\sum_{i\ge nK/2}\mu_{\Delta}|_{B}\big\{ R\ge i\big\}\\
         &\precsim n^{1-1/\alpha}+n e^{-CKn/2}+n^{1-1/\alpha}\precsim n^{1-1/\alpha},
     \end{align*}where the fourth ``$\precsim$" is due to step 4. Hence we conclude the proof with these five steps.
\end{proof}

\begin{remark}\label{adapt3}
Actually we proved an abstract result: it follows from the proof of Theorem \ref{thm} that if the hole $H$ has an interior and is in the base of a one-dimensional Young tower with polynomial mixing rates, then the escape rate is the same as the mixing rate.

For the high-dimensional open dynamical systems that can be modeled by a first return polynomial Young tower $\Delta$ with a hole $H$ in the base $B$, we note from our proof of Theorem \ref{thm} that $\mu_{\Delta}(\tau_{H \times \{0\}})\succsim n^{1-1/\alpha} $ still holds, and $\mu_{\Delta}(\tau_{H \times \{0\}})\precsim n^{1-1/\alpha} $ holds provided that $\mu_{\Delta}|_B(\tau_{H,B}>n)\precsim e^{-C n}$ where $\tau_{H,B}:=\inf\{n\ge 1: f^R \in H\}$. This can be proved for some uniformly expanding open Markov maps by choosing a suitable Banach space to prove uniform Lasota-Yorke inequalities (e.g., generalized bounded variation functions) instead of the bounded variation functions for one-dimensional dynamics in \cite{Demersexp}.
\end{remark}

\begin{remark}\label{adapt1}
    Our proof can be easily extended to various intermittent systems. General statements and refined results can be found in \cite{WOPG}.  Here is one example, a finite-type $\alpha$-Farey map of exponent $\theta$ in \cite{farey}: \begin{eqnarray}\label{farey}
F_{\alpha}(x) =
\begin{cases}
(1-x)/a_1,      & x \in [t_2,t_1]\\
a_{n-1}(x-t_{n+1})/a_n+t_n,  & x \in [t_{n+1}, t_n], n \ge 2 \\
0,  & x =0 \\
\end{cases},
\end{eqnarray} where $a_n \ge 0, t_n=\sum_{i\ge n}a_n, t_1=1$ and $t_n\approx n^{-\theta}, \theta>1$. Following the same argument in the proof of Theorem \ref{thm}, if $H$ satisfies $int(H)\neq \emptyset, 0\notin cls(H)$, then  $\mu(\tau_H>n)\approx n^{1-\theta}$. 
\end{remark}

\begin{remark}
    A remark on the maximal large deviations which was not mentioned in \cite{mldp}: the paper \cite{mldp} proves that polynomial mixing rates imply polynomial maximal large deviations. However, if checking the proof of Lemma 3.3 and Proposition 3.4 of \cite{mldp} and the proof of Theorem 3.1 of \cite{mldp}, one can drop the condition of mixing rates and have the following new claim: $$\Big|\Big|\frac{\sum_{i\le N}\phi \circ f^i}{N}\Big|\Big|^{2p}_{2p} \precsim N^{-\beta} \text{ implies }\Big|\Big|\sup_{n \ge N}\Big| \frac{\sum_{i \le n}\phi \circ f^i}{n}\Big|\Big|\Big|^{2p}_{2p}\precsim N^{-\beta}.$$ where $\phi$ is an observable of a general dynamical system $(X, f, \mu)$, $\beta>0$ and $p$ is a positive integer. This can also be proved by the technique in \cite{sutams}. Note that mixing rates are not needed in this claim, we apply it to diophantine circle rotation and polynomial maximal large deviations for this system still hold.
\end{remark}

\section{Solenoids with intermittency}\label{highd}

In this section, we present a similar method to approach polynomial escape rates for Solenoids with intermittency. Let $\mathcal{M}=[0,1] \times \mathbb{D}$, $f(x,z)=(g_{\alpha}(x), \theta  z+e^{2\pi i x}/2)$, where 
\begin{enumerate}
    \item $g_{\alpha}$ is the Liverani-Saussol-Vaienti map (\ref{lsvmap})
    \item $\theta>0$ is so small that $\theta  ||Dg_{\alpha}||_{\infty}< 1-\theta$.
\end{enumerate}

It follows from \cite{Alves} that the attractor is $\bigcap_{i\ge 0}f^i(\mathcal{M})$, an SRB probability measure $\mu_{\alpha}$ exists on the attractor. 

\begin{theorem}[Escape Rates II]\ \label{thm1}\ \par
For the dynamical system $(X, f, \mu):=(\bigcap_{i\ge 0}f^i(\mathcal{M}), f, \mu_{\alpha})$, if a (dis)connected hole $H$ satisfies $ (\{0\}\times \mathbb{D})\bigcap cls(H)=\emptyset$ and $int(H)\neq \emptyset$, then  $\mu(\tau_H>n)\approx n^{1-1/\alpha}$.
 \end{theorem}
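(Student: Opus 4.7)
The plan is to imitate the five-step strategy of Theorem~\ref{thm}, replacing the one-dimensional first-return tower by the Young tower for the solenoid constructed in \cite{Alves}. Because the $z$-direction is uniformly contracted with rate $\theta$, the polynomial tails on the tower are inherited from $g_\alpha$; the hypothesis $(\{0\}\times\mathbb{D})\cap cls(H)=\emptyset$ plays exactly the role that $0\notin cls(H)$ did in the $1$D case, as it lets us push the singularity away from $H$.

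First I would pick $m$ large enough that $a_m := g_\alpha|_L^{-m}\{1\}$ satisfies $H\subseteq ((a_m,1]\times\mathbb{D})\cap X$. The construction of \cite{Alves} provides a Young tower $\Delta$ with a hyperbolic product base $B\subset((a_m,1]\times\mathbb{D})\cap X$, a return time $R:B\to\mathbb{N}$ whose tail is $O(n^{1-1/\alpha})$, an induced map $F^R:B\to B$ that is uniformly hyperbolic with bounded distortion, an $F$-invariant probability $\mu_\Delta$ on $\Delta$, and a projection $\pi:\Delta\to X$ with $\pi_*\mu_\Delta=\mu$. The lifting step is then purely formal and yields $\mu(\tau_H>n)=\mu_\Delta(\tau_{H\times\{0\}}>n)$, while the lower bound follows immediately from the inclusion $\bigcup_{i\ge 1}\{(x,i):R(x)>n+i\}\subseteq\{\tau_{H\times\{0\}}>n\}$ together with the polynomial tail of $R$.

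The upper bound is carried out verbatim as in Theorem~\ref{thm}: denote $K:=\mu_\Delta(B)$, set $\tau_{H,R}:=\sum_{m=0}^{\tau_{H\times\{0\}}}\mathbbm{1}_B\circ F^m$ and $\tau_{H,B}:=\tau_{H,R}|_B$, and split
\[
\mu_\Delta(\tau_{H\times\{0\}}>n)\le \mu_\Delta\Bigl\{\max_{N\ge n}\Bigl|\tfrac{1}{N}\sum_{m=0}^{N}\mathbbm{1}_B\circ F^m-K\Bigr|\ge\tfrac{K}{2}\Bigr\}+\mu_\Delta\bigl(\tau_{H,R}>nK/2\bigr).
\]
The maximal large deviation estimate of Theorem~3.1 in \cite{mldp} controls the first term by $n^{1-1/\alpha}$, and the second term reduces, via the return-time decomposition, to bounding $\mu_\Delta|_B(\tau_{H,B}>nK/2,\,R\ge i)$.

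The main obstacle is therefore Step 4: obtaining the exponential estimate $\mu_\Delta|_B(\tau_{H,B}>n\cap\{R=i\})\precsim \mu(R=i)e^{-Cn}$ for the induced hyperbolic map $F^R$. In the one-dimensional case this was provided directly by Theorem~2.1 and Appendix~A of \cite{Demersexp}. Here the induced map is a piecewise uniformly hyperbolic diffeomorphism with Markov product structure, so I would either quotient along stable leaves to reduce to a piecewise uniformly expanding map (where \cite{Demersexp} applies and whose escape rate equals that of the full map because stable leaves are contracted into the hole uniformly) or invoke existing exponential escape results for uniformly hyperbolic systems with holes such as \cite{escapesinai, escapeinfinitesinai}. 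The hypothesis $int(H)\neq\emptyset$ ensures that $H$ contains a small Markov hyperbolic box, so the smallness assumption of those results can be met by further inducing if necessary. Once this exponential bound is in hand, combining with $\sum_{i\ge 1}\mu(R\ge i)e^{-CKn/2}=o(n^{1-1/\alpha})$ closes the upper bound and finishes the proof.
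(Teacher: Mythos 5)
Your overall strategy matches the paper's, but there is a genuine gap in how you handle the upper bound, precisely at the point where you try to pass from the hyperbolic tower to a reduced (quotient) problem. The hole $H$ is \emph{not} a union of stable leaves $\{x\}\times\mathbb{D}$, so neither the lifted set $H\times\{0\}\subseteq\Delta$ nor the corresponding escape time descends to the quotient tower $\widetilde{\Delta}$. Your justification — ``stable leaves are contracted into the hole uniformly'' — does not hold as stated: a stable leaf here is an entire disk $\{x\}\times\mathbb{D}$, which certainly does not lie in a hole with merely nonempty interior. Likewise, the alternative route through exponential-escape results for hyperbolic maps runs into the problem that $H\cap\Lambda$ need not be a Markov hole for the induced map $F^R$ on the hyperbolic product set, and ``further inducing'' does not repair this for free.

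What the paper does instead is a short but essential extra step. Because $int(H)\neq\emptyset$, one can find $N$ and a small Markov interval $I\subseteq[a_m,1]$ such that $f^N(I\times\mathbb{D})\subseteq H$: the forward image of the $\gamma^s$-saturated tube $I\times\mathbb{D}$ is contracted in the $\mathbb{D}$-direction and can thus be fit inside $H$. Then one uses the invariance of $\mu$ under $f$ to convert the escape time into the hole $f^N(I\times\mathbb{D})$ into the escape time into $I\times\mathbb{D}$ itself, i.e.\ $\mu\bigl(\inf\{n\ge1: f^n\in f^N(I\times\mathbb{D})\}>n\bigr)=\mu\bigl(\inf\{n\ge1: f^n\in I\times\mathbb{D}\}>n\bigr)$, and this dominates $\mu(\tau_H>n)$ from above. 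The set $I\times\mathbb{D}$ \emph{is} a union of stable leaves, so its indicator (and the escape time) passes to the quotient tower $\widetilde{\Delta}$ via $\widetilde{\pi}_\Delta$. At that point the entire upper-bound argument of Theorem~\ref{thm} (maximal large deviations from \cite{mldp} for $\sum\mathbbm{1}_B\circ\widetilde{F}^m$, and the exponential escape bound from \cite{Demersexp} for the piecewise expanding induced map) applies verbatim. Without the $f^N(I\times\mathbb{D})\subseteq H$ replacement and the accompanying measure-preservation identity, the reduction you propose does not go through; with it, your remaining steps are correct and coincide with the paper's.
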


 \begin{proof} Choose a sufficiently small $a_m:=g_{\alpha}|_L^{-m}\{1\}$ (here $g_{\alpha}|_L$ is the left branch of $g_{\alpha}$) such that $H \subseteq (a_m,1]\times \mathbb{D}$. Define a hyperbolic product set $\Lambda:=([a_m,1] \times \mathbb{D}) \bigcap X$. Each stable manifold $\gamma^s$ is in the form of $\{x\}\times \mathbb{D}, x\in [a_m,1]$. The scheme of the proof is similar to that of Theorem \ref{thm}.  
 
     \vspace*{4pt}\noindent\textbf{1. A first return tower.} It follows from \cite{Sucmp} that the system $(X, f, \mu)$ can be modelled by a first return hyperbolic Young tower $\Delta:=\{(x,n) \in \Lambda \times \{0,1,2, \cdots\}: n<R(x)\}$. Here $R(x):=\inf\{n\ge 1: f^n(x)\in \Lambda\}$ defined on $\Lambda$ is a first return time with $\mu(R>n)\approx n^{1-1/\alpha}$, the dynamics  $F:\Delta \to \Delta$ sends $(x,n)$ to $(x,n+1)$ if $n+1<R(x)$, and $(x,n)$ to $(f^R(x),0)$ if $n=R(x)-1$. 
     
     It follows from \cite{Sucmp, Alves, Subbb} that  $\pi:\Delta \to X$ defined by $\pi(x,n)=f^n(x)$ is a bijection, a relation $\sim$ on $\Delta$ defined by \[(x,i) \sim (y,j) \text{ if and only if } i=j, x,y \text{ belong to a stable manifold }\gamma^s\]is an equivalence relation, and there is a quotient Young tower  $\widetilde{\Delta}:=\Delta/\sim$, a projection $\widetilde{\pi}_{\Delta}: \Delta\to \widetilde{\Delta}$, a quotient dynamics $\widetilde{F}: \widetilde{\Delta} \to \widetilde{\Delta}$, a probability $\mu_{\Delta}$ on $\Delta$ and a probability $\mu_{\widetilde{\Delta}}$ on $\widetilde{\Delta}$ satisfying  \begin{align}\label{youngrelations}
         \pi_* \mu_{\Delta}=\mu, \quad  \left(\widetilde{\pi}_{\Delta}\right)_{*}\mu_{\Delta}=\mu_{\widetilde{\Delta}},  \quad \widetilde{F}_{*}\mu_{\widetilde{\Delta}}=\mu_{\widetilde{\Delta}}, \quad F_{*}\mu_{\Delta}=\mu_{\Delta}, \quad \widetilde{\pi}_{\Delta} \circ F=\widetilde{F} \circ \widetilde{\pi}_{\Delta}.
     \end{align}\\
     \vspace*{4pt}\noindent\textbf{2. Lifting.} By (\ref{youngrelations}), $\mu(\tau_H>n)=\mu_{\Delta}(\inf\{n\ge 1: f^n \circ \pi \in H\}>n)=\mu_{\Delta}(\inf\{n\ge 1: F^n \in \pi^{-1}H\}>n)=\mu_{\Delta}(\inf\{n\ge 1: F^n \in H\times \{0\}\}>n)$.\\
     \vspace*{4pt}\noindent\textbf{3. Lower bounds.}  $\mu(\tau_H>n)\succsim n^{1-1/\alpha}$follows from the same arguments in step 3 of the proof in Theorem \ref{thm}.

     \vspace*{4pt}\noindent\textbf{4. Upper bounds.} Since $int(H)\neq \emptyset$, then there is a small disk $D\subseteq \mathbb{D}$ and a small Markov hole $I_1\subsetneq [a_m,1]$ (see its definition in step 4 of the proof of Theorem \ref{thm}) such that $I_1\times D\subsetneq H$. So there is a sufficiently large $N$ and a sufficiently small Markov hole $I\subsetneq [a_m,1]$ such that $g_{\alpha}(I)=I_1$ and $f^N(I\times \mathbb{D})\subseteq I_1\times D\subsetneq H$. Then by (\ref{youngrelations}) we have $\mu(\tau_H>n) $\begin{align*}
          &\le \mu(\inf\{n\ge 1:f^n\in  f^N(I\times \mathbb{D})\}>n)\\
         &=\mu(\inf\{n\ge 1:f^n\in  I\times \mathbb{D}\}>n)\\
         & =\mu_{\Delta}(\inf\{n\ge 1:F^n\in (I\times \mathbb{D})\times \{0\}\}>n)\\
         &  =\mu_{\Delta}(\inf\{n\ge 1:F^n\in \widetilde{\pi}_{\Delta}^{-1}(I\times \{0\})\}>n)=\mu_{\widetilde{\Delta}}(\inf\{n\ge 1:\widetilde{F}^n\in I\times \{0\}\}>n).
     \end{align*} Now we repeat the arguments of steps 4 and 5 in the proof of Theorem \ref{thm}, by replacing $H$, $F$ and $\Delta$ there with $I$, $\widetilde{F}$ and $\widetilde{\Delta}$, respectively. Thus we have \[\mu(\tau_H>n)\le \mu_{\widetilde{\Delta}}(\inf\{n\ge 1:\widetilde{F}^n\in I\times \{0\}\}>n)\precsim n^{1-1/\alpha}.\]
Hence, we conclude the proof.
     \end{proof}
     \begin{remark}\label{adapt2}
         The method in this section can be easily extended to various hyperbolic systems, e.g. replacing $g_{\alpha}$ by Farey maps (\ref{farey}) gives a new three-dimensional hyperbolic system with intermittency, a similar argument gives $\mu(\tau_H>n)\approx n^{1-\theta}$ when $(\{0\}\times \mathbb{D}) \bigcap cls(H)=\emptyset$ and $int(H)\neq \emptyset$.
     \end{remark}
     \begin{remark}
       Bunimovich focusing billiards (e.g., Bunimovich stadiums, Bunimovich flowers) are typical examples of hyperbolic systems with intermittency. However, we will not present a decay rate of $\mu(\tau_H>n)$ in this paper. This is because studying a stronger property ``an explicit asymptotic expansion for the  $\mu(\tau_H>n)$ for Bunimovich billiards" reveals many interesting facts in chaos (see \cite{chaos}). We will address this new question for this type of billiards in a separate paper.
     \end{remark}

\section*{Declarations}
\subsection*{\textbf{Conflict of interest} This manuscript has no conflict of interest}

\subsection*{\textbf{Open Access} This article is licensed under a Creative Commons Attribution 4.0 International License, which
permits use, sharing, adaptation, distribution and reproduction in any medium or format, as long as you give
appropriate credit to the original author(s) and the source, provide a link to the Creative Commons licence,
and indicate if changes were made. The images or other third party material in this article are included in the
article’s Creative Commons licence, unless indicated otherwise in a credit line to the material. If material is
not included in the article’s Creative Commons licence and your intended use is not permitted by statutory
regulation or exceeds the permitted use, you will need to obtain permission directly from the copyright holder.
To view a copy of this licence, visit http://creativecommons.org/licenses/by/4.0/.}

\subsection*{\textbf{Data Availability Statement} The authors declare that the data is not presented in this study.} 

\bibliography{bibtext}

\end{document}